\newtheorem{lemma}{Lemma}[section]
\newtheorem{proposition}{Proposition}[section]
\newtheorem{theorem}{Theorem}[section]
\begin{document}

\title{On the Scalar Curvature for the Noncommutative Four Torus}

\author{Farzad Fathizadeh} 

\thanks{E-mail address: ffathiz@uwo.ca}

\date{}

\maketitle

\begin{abstract}

The scalar curvature for the noncommutative four torus $\mathbb{T}_\Theta^4$, 
where its flat geometry is conformally perturbed by a Weyl factor, is computed by making the use 
of a noncommutative residue that involves integration over the 3-sphere. This method is more 
convenient since it does not require the rearrangement lemma and it is advantageous as it explains 
the simplicity of the final functions of one and two variables, which describe the curvature with the help of 
a modular automorphism. In particular, 
it readily allows to write the function of two variables as the sum of a finite difference 
and a finite product of the one variable function. The curvature formula is simplified for dilatons of the 
form $sp$, where $s $ is a real parameter and $p \in C^\infty(\mathbb{T}_\Theta^4)$ is an arbitrary projection, 
and it is observed that, in contrast to the two dimensional case studied by A. Connes and H. Moscovici, unbounded 
functions of the parameter $s$ appear in the final formula. An explicit formula for the gradient of the 
analog of the Einstein-Hilbert action is also calculated.

\end{abstract}

\tableofcontents

\section{Introduction}

The computation of scalar curvature \cite{conmos2, fatkha4} for noncommutative two tori $\mathbb{T}_\theta^2$ 
was stimulated by  
the seminal work \cite{contre} of A. Connes and P. Tretkoff on the Gauss-Bonnet theorem for these 
$C^*$-algebras, and its extension in \cite{fatkha1} to general translation-invariant conformal structures.  
Flat geometries of $\mathbb{T}_\theta^2$ \cite{con1, con1.5}, whose conformal classes are represented by  positive 
Hochschild cocycles \cite{con3}, are conformally perturbed by means of a positive invertible element  
$e^{-h}$, where $h=h^* \in C^\infty(\mathbb{T}_\theta^2)$ is a dilaton \cite{contre}. Local geometric invariants, 
such as scalar curvature, can then be computed by considering small time asymptotic expansions, 
which depend on the action of the algebra on a Hilbert space and the distribution at infinity of the 
eigenvalues of a relevant geometric operator, namely the Laplacian of the conformally perturbed metric.

Following these works, the local differential geometry of noncommutative tori equipped with curved metrics 
has received considerable attention in recent years \cite{bhumar, fatkha3, dabsit1, fatkha2, dabsit2}. See also 
\cite{ros, fatghokha}. It should be mentioned that conformal geometry in the noncommutative setting is intimately related to 
twisted spectral triples and we refer to \cite{conmos1.5, contre, conmos2, ponwan, mos1, gremarteh} for detailed discussions. 
Also it is closely related to the spectral action computations in the presence of a dilaton \cite{chacon1, conmar}. 
For noncommutative four tori $\mathbb{T}_\Theta^4$, 
the scalar curvature is computed in \cite{fatkha2} and it is shown that flat metrics are the critical points of the analog 
of the Einstein-Hilbert action. Also noncommutative residues for noncommutative tori were studied in \cite{fatwon, levjimpay, fatkha2} (see also \cite{sit1}). We refer to \cite{wod, guil} and \cite{lesjim} for detailed discussions on noncommutative residues for classical manifolds.

A crucial tool for the local computations on noncommutative tori has been Connes' pseudodifferential 
calculus, developed for $C^*$-dynamical systems in \cite{con1}, which can be employed to work in the 
heat kernel scheme of elliptic differential operators and index theory 
(cf. \cite{gil}). An obstruction in these 
calculations, which is a purely noncommutative feature, is the appearance of integrals of functions over the 
positive real line that are $C^*$-algebra valued. This is overcome by the rearrangement lemma 
\cite{contre, conmos2} (cf. \cite{bhumar, fatkha4, les}), which uses the modular automorphism of the state 
implementing the conformal perturbation   
and delicate Fourier analysis to reorder the integrands and computes the integrals explicitly. The integrals 
are then expressed as somewhat complicated functions of the modular  automorphism acting on 
relevant elements of the $C^*$-algebra. This lemma has been generalized in \cite{les}, and 
the work in \cite{dabsit2} is an instance where the generalization is used.

A striking fact about the final formulas for the curvature of noncommutative tori is their simplicity and their 
fruitful properties such as being entire. Considering the numerous functions from the rearrangement 
lemma that get involved in hundreds of terms in the computations, the final simplicity indicates an enormous 
amount of cancellations, which are carried out by computer assistance. One of the aims of this paper is 
to explain this simplicity by computing the scalar curvature for $\mathbb{T}_\Theta^4$ without using the 
rearrangement lemma. We then study the curvature formula for the dilatons that are associated with 
projections in $C^\infty(\mathbb{T}_\Theta^4)$. The gradient of the Einstein-Hilbert action is also 
calculated, which prepares the ground for studying its associated flow in future works.

This article is organized as follows. In \S \ref{preliminaries} we recall the formalism and notions used in 
\cite{fatkha2}  concerning the 
conformally perturbed Laplacian on $\mathbb{T}_\Theta^4$.  
In \S \ref{SCsection} we use a noncommutative residue that involves integrations on the 3-sphere 
$\mathbb{S}^3$ to compute the scalar curvature. This method is quite convenient as it does not require any help 
from the rearrangement lemma and it is advantageous as it explains the simplicity of the final formula \eqref{SCformula}.  
Also it readily allows to write the function of two variables in \eqref{SCformula} as the sum of a finite difference and a finite product 
of the one variable function.  In \S \ref{PRsection} the curvature formula is simplified for dilatons of the 
form $sp$, where $s $ is a real parameter and $p \in C^\infty(\mathbb{T}_\Theta^4)$ is an arbitrary projection.  
It is observed that, in contrast to the two dimensional case studied in \cite{conmos2}, unbounded 
functions of the parameter $s$ appear in the final formula. In \S \ref{GRsection}, we compute an explicit 
formula for the gradient 
of the analog of the Einstein-Hilbert action in terms of finite differences (cf. \cite{conmos2, les}) of a one variable function 
that describes this action \cite{fatkha2}.

\section{Preliminaries} \label{preliminaries}

The noncommutative four torus $C(\mathbb{T}_\Theta^4)$ is the universal $C^*$-algebra generated by 
four unitaries $U_1, U_2, U_3, U_4$, which satisfy the commutation relations 
\[
U_k U_j = e^{2 \pi i \theta_{k j}} U_j U_k, 
\]
where $\Theta=(\theta_{kj}) \in M_4(\mathbb{R})$ is an antisymmetric matrix. For simplicity 
elements of the form $U_1^{\ell_1}U_2^{\ell_2}U_3^{\ell_3}U_4^{\ell_4} \in C(\mathbb{T}_\Theta^4)$ shall be 
denote by $U^\ell$ for any 4-tuple of integers $\ell=(\ell_1, \ell_2, \ell_3, \ell_4)$. 

There is a natural action of 
$\mathbb{R}^4$ on this $C^*$-algebra, which is defined by 
\[
\alpha_s (U^\ell) = e^{i s\cdot \ell} U^\ell, \qquad s \in \mathbb{R}^4, \qquad \ell \in \mathbb{Z}^4, 
\]
and is extended to a 4-parameter family of $C^*$-algebra automorphisms 
$\alpha: \mathbb{R}^4 \to \textnormal{Aut}\big(C(\mathbb{T}_\Theta^4)\big)$. 
The infinitesimal generators of this action, denoted by $\delta_1, \delta_2, \delta_3, \delta_4$,  are defined 
on the smooth subalgebra 
\[
C^\infty(\mathbb{T}_\Theta^4) = \{ a \in C(\mathbb{T}_\Theta^4); \,\, \textnormal{the map} \,\, \mathbb{R}^4 \ni s \mapsto \alpha_s(a) \,\, \textnormal{is smooth}   \}, 
\]
which is a dense subalgebra of $C(\mathbb{T}_\Theta^4)$ and can alternatively be defined as the space of elements 
of the form $\sum_{\ell \in \mathbb{Z}^4} a_\ell U^\ell$ with rapidly decaying complex coefficients $(a_\ell) \in \mathcal{S}(\mathbb{Z}^4).$ These derivations are determined by the relations $\delta_j (U_j) = U_j$ and $\delta_j(U_k) = 0$, if $j \neq k$.

One can consider a complex structure on $C(\mathbb{T}_\Theta^4)$ (cf. \cite{fatkha2}) by introducing the analog of the Dolbeault operators  
\[ 
\partial_1 =  \delta_1 - i \delta_3, \qquad 
\partial_2=  \delta_2 - i \delta_4, \qquad 
 \bar \partial_1 =  \delta_1 + i \delta_3, 
\qquad \bar \partial_2= \delta_2 + i \delta_4, 
\] 
and by setting  
\[
\partial = \partial_1 \oplus \partial_2, \qquad \bar \partial = {\bar \partial}_1 \oplus  {\bar \partial}_2,
\] which are 
maps from $C^\infty(\mathbb{T}_\Theta^4)$ to $ C^\infty(\mathbb{T}_\Theta^4) \oplus C^\infty(\mathbb{T}_\Theta^4).$

There is a canonical positive faithful trace  $ \varphi_0:C(\mathbb{T}_\Theta^4)\to \mathbb{C}$, which is defined on the smooth algebra by 
\[
\varphi_0 \big (\sum_{\ell \in \mathbb{Z}^4} a_\ell U^\ell \big ) = a_0. 
\]
Following the method introduced in \cite{contre},  $\varphi_0$ is viewed as the volume form, and conformal 
perturbation of the metric is implemented in \cite{fatkha2} by choosing a dilaton $h=h^* \in C^\infty(\mathbb{T}_\Theta^4)$ 
and by considering the linear functional $ \varphi:C(\mathbb{T}_\Theta^4)\to \mathbb{C}$ given by 
\[
\varphi(a) = \varphi_0 (a e^{-2h}), \qquad  a \in C(\mathbb{T}_\Theta^4). 
\]
This is a KMS state and we consider the associated 1-parameter group 
$\{\sigma_t \}_{t \in \mathbb{R}}$ of inner automorphisms given by 
\[
\sigma_t (a) = e^{ith} a e^{-ith}, \qquad a \in C(\mathbb{T}_\Theta^4), 
\]
and will use the following operators substantially
\[
\Delta (a) = \sigma_i(a)=e^{-h} a e^h,  \qquad \nabla(a)= \log \Delta (a) = [-h, a], \qquad a \in C(\mathbb{T}_\Theta^4). 
\]
Denoting the inner product  associated with the state $\varphi$  by 
\begin{equation} 
(a, b)_\varphi = \varphi(b^*a), \qquad a,b \in C(\mathbb{T}_\Theta^4), \nonumber
\end{equation}
the Hilbert space completion of $C(\mathbb{T}_\Theta^4)$ 
with respect to this inner product is denoted by $\mathcal{H}_\varphi$, and 
the analog  of the de Rham differential  is defined in \cite{fatkha2} by  
\[
d = \partial \oplus \bar \partial : \mathcal{H}_\varphi \to 
\mathcal{H}_\varphi^{(1,0)} \oplus \mathcal{H}_\varphi^{(0,1)}. 
\]
The Hilbert spaces $\mathcal{H}_\varphi^{(1,0)}$ and $\mathcal{H}_\varphi^{(0,1)} $ 
are respectively the completions of the analogs of $(1, 0)$-forms and $(0, 1)$-forms, namely  the spaces  
$\{ \sum_{i=1}^n a_i \partial b_i; a_i, b_i \in C^\infty(\mathbb{T}_\Theta^4), n \in \mathbb{N} \}$ and 
$\{ \sum_{i=1}^n a_i \bar \partial b_i; a_i, b_i \in C^\infty(\mathbb{T}_\Theta^4), n \in \mathbb{N} \}$, with 
the appropriate inner product related to the conformal factor. 

The Laplacian $d^*d :  \mathcal{H}_\varphi  \to  \mathcal{H}_\varphi $ is then computed and shown to be 
anti-unitarily equivalent to the operator 
\[ \triangle_\varphi = 
e^{h} \bar \partial_1 e^{-h} \partial_1 e^{h} + 
e^{h} \partial_1 e^{-h} \bar \partial_1 e^{h} + 
e^{h} \bar \partial_2 e^{-h} \partial_2 e^{h} + 
e^{h}  \partial_2 e^{-h} \bar \partial_2 e^{h}. 
\]

\section{Scalar Curvature for $\mathbb{T}_\Theta^4$ and its Functional Relations} \label{SCsection}

The scalar curvature of the conformally perturbed metric on $\mathbb{T}_\Theta^4$ is the unique element 
$R \in C^\infty(\mathbb{T}_\Theta^4)$ such that 
\begin{equation} 
\textnormal{res}_{s=1} \textnormal{Trace}(a \triangle_{\varphi}^{-s}) = 
\varphi_0(a R),  \qquad \forall a \in C^\infty(\mathbb{T}_\Theta^4). \nonumber
\end{equation}
Since the linear functional 
\[
\int\!\!\!\!\!\!- \, P= \textnormal{res}_{s=0} \textnormal{Trace}(P \triangle_\varphi^{-s})
\]
defines a trace on the algebra of pseudodifferential operators \cite{conmos1, hig1}, it follows from 
the uniqueness of traces on the algebra of pseudodifferential operators \cite{fatwon, fatkha2, levjimpay} 
that it coincides with the noncommutative residue defined in \cite{fatkha2}. Therefore there exists a constant 
$c$ such that for any $P$
\[
\int\!\!\!\!\!\!- \, P = c\, \int_{\mathbb{S}^3} \varphi_0 (\rho_{-4}(\xi)) \, d\Omega, 
\]
where $\rho_{-4}$ is the homogeneous term of order $-4$ in the expansion of the symbol of $P$, and 
$d\Omega$ is the invariant measure on the sphere $\mathbb{S}^3$. Therefore, in order to compute 
the curvature, we can write 
\begin{eqnarray} 
\textnormal{res}_{s=1} \textnormal{Trace}(a \triangle_{\varphi}^{-s}) =  
\textnormal{res}_{s=0} \textnormal{Trace}(a \triangle_{\varphi}^{-s-1})  
= \int\!\!\!\!\!\!- \, a \triangle_{\varphi}^{-1} 
= c \, \varphi_0 \Big (\int_{\mathbb{S}^3} a \,b_2(\xi) \, d \Omega \Big ), \nonumber
\end{eqnarray}
where $b_j$ is the homogeneous term of order $-2-j$ in the asymptotic expansion of the symbol 
of the parametrix 
of $\triangle_\varphi$. Hence 
\[
 R = c \int_{\mathbb{S}^3}  \,b_2(\xi) \, d \Omega.  
\]
We compute $b_2$ by applying Connes' pseudodifferential calculus 
\cite{con1} to the symbol of $\triangle_\varphi$, which is the sum of the homogeneous components     
\begin{eqnarray}
a_2 (\xi) =  e^h \sum_{i=1}^4  \xi_i^2, \quad 
a_1 (\xi)=  \sum_{i=1}^4  \delta_i(e^h) \xi_i,  \quad 
 a_0(\xi) =  \sum_{i=1}^4 \big (  \delta_i^2(e^h) - \delta_i(e^h)e^{-h} 
\delta_i(e^h) \big ). \nonumber
\end{eqnarray}
That is, we solve the following equation explicitly up to $b_2$  
\[
(b_0 + b_1 + b_2 + \cdots) \circ (a_0 + a_1+ a_2) \sim 1,  
\]
which in general yields 
\[
b_0 = a_2^{-1} = \big ( e^h \sum_{i=1}^4  \xi_i^2 \big )^{-1},  \qquad
b_n  = - \sum_{\substack{2+j+|\ell |-k=n, \\ 0 \leq j <n, \, 0 \leq k \leq 2}} \frac{1}
{\ell !}
\partial^{\ell} (b_j)
\delta^{\ell} (a_k) b_0 \qquad (n >1). 
\]
Here, for any $\ell=(\ell_1, \ell_2, \ell_3, \ell_4) \in \mathbb{Z}_{\geq0}^4$, 
$\partial^\ell$  denotes $\partial_{\xi_1}^{\ell_1} \partial_{\xi_2}^{\ell_2} \partial_{\xi_3}^{\ell_3} \partial_{\xi_4}^{\ell_4} $ and 
$\delta^\ell$ denotes $\delta_1^{\ell_1}\delta_2^{\ell_2}\delta_3^{\ell_3}\delta_4^{\ell_4}$.
Note that the composition rule for pseudodifferential symbols \cite{con1}, 
\[ \rho \circ \rho' = 
\sum_{\ell \in \mathbb{Z}^4_{\geq 0} } 
\frac{1}{\ell ! }
\partial^{\ell} \rho (\xi) \,
\delta^{\ell} (\rho'(\xi)), 
\]
is used in the derivation of the above recursive formula for $b_n$. 

Computing $b_2$ and restricting it to 
$\mathbb{S}^3$ by the substitutions  
\begin{eqnarray}
&& \xi_1 = \cos(\psi)    ,  \quad \qquad  \qquad \qquad \,\,\,\,
\xi_2 =   \cos(\theta) \sin (\psi),  \nonumber \\ 
&&\xi_3 = \sin (\theta ) \cos (\phi ) \sin (\psi ),      \qquad   \,
\xi_4 = \sin (\theta ) \sin (\phi ) \sin (\psi ) ,      \nonumber
\end{eqnarray}
with $0 \leq \psi < \pi,$ $0 \leq \theta < \pi,$ 
$0 \leq \phi < 2 \pi,$ we perform its integral over the sphere and find that 
\begin{eqnarray}
\int_{\mathbb{S}^3} b_2 (\xi) \, d \Omega &=& 
\int_0^{2 \pi} \int_0^\pi \int_0^\pi  b_2 (\xi) \sin (\theta ) \sin ^2(\psi )\, d \psi \,  d\theta \, d \phi  \nonumber 
\end{eqnarray}
\begin{eqnarray}  \label{SC4formula1}
&=&\sum_{i=1}^4 \Big (\left(
-2 \pi ^2\right)b_0\delta _i\delta _i(e^{h})b_0+
\left(2 \pi^2\right)b_0\delta _i(e^{h})\frac{1}{e^{h}}\delta _i(e^{h})b_0+
\frac{5 \pi^2}{2}b_0\delta _i(e^{h})b_0\delta _i(e^{h})b_0 \nonumber \\
&&+\left(3 \pi^2\right)b_0e^{h}b_0\delta _i\delta _i(e^{h})b_0+
\left(-8 \pi^2\right)b_0e^{h}b_0\delta _i(e^{h})b_0\delta _i(e^{h})b_0 \nonumber \\
&&+
\left(-2 \pi^2\right)b_0e^{h}b_0e^{h}b_0\delta _i\delta _i(e^{h})b_0 
   +\left(-\pi^2\right)b_0\delta _i(e^{h})b_0e^{h}b_0\delta _i(e^{h})b_0 \nonumber \\
   &&+
   \left(2 \pi^2\right)b_0e^{h}b_0\delta _i(e^{h})b_0e^{h}b_0\delta _i(e^{h})b_0
   + 
   \left(4 \pi^2\right)b_0e^{h}b_0e^{h}b_0\delta _i(e^{h})b_0\delta _i(e^{h})b_0 \Big ) \nonumber \\
&&= \pi^2 \sum_{i=1}^{4} \Big ( -e^{-h} \delta_i^2(e^h) e^{-h} +\frac{3}{2} e^{-h} \delta_i(e^h) e^{-h}  \delta_i(e^h)e^{-h} \Big ). 
\end{eqnarray}
The fact that, over $\mathbb{S}^3$,  $b_0$ reduces to $e^{-h}$ is crucial in the last equation, which leads to 
such a simple final formula.

We then use the following identities  \cite{contre, conmos2, fatkha4} to write the expression  \eqref{SC4formula1} in terms of 
$\nabla=\log \Delta = - \textnormal{ad}_{h}$ and $\delta_i(h)$: 
\begin{eqnarray}
e^{-h}\delta_i(e^h) = g_1(\Delta) (\delta_i(h)),  \qquad
e^{-h} \delta_i^2(e^h) =  g_1(\Delta) (\delta_i^2(h)) +  
2g_2(\Delta, \Delta)(\delta_i(h) \delta_i(h)), \nonumber
\end{eqnarray}
where
\begin{equation} \label{gfunction}
g_1(u)=\frac{u-1}{\log u}, \qquad
g_2(u, v)= \frac{u (v-1) \log (u)-(u-1) \log (v)}{\log (u) \log (v) 
(\log (u)+\log(v))}. 
\end{equation}
This yields 
\begin{eqnarray} \label{SCformula}
\int_{\mathbb{S}^3} b_2 (\xi) \, d \Omega &=& \frac{1}{c} R \nonumber \\ &=& \pi^2 \sum_{i=1}^4 \Big ( - e^{-h} \Delta^{-1} g_1(\Delta) (\delta_i(h)) 
-2 e^{-h} \Delta^{-1} \big ( g_2(\Delta, \Delta)(\delta_i(h)^2) \big ) \nonumber \\
&& \qquad+ \frac{3}{2} e^{-h} \Delta^{-1} \big (  g_1(\Delta)(\delta_i(h)) g_1(\Delta)(\delta_i(h))  \big ) \Big ) \nonumber \\ 
&=& \pi^2 e^{-h} k(\nabla) \Big ( \sum_{i=1}^4 \delta_i^2(h) \Big) + \pi^2 e^{-h} H(\nabla, \nabla) \Big ( \sum_{i=1}^4 \delta_i(h)^2 \Big ), 
\end{eqnarray}
where 
\[
k(s)= - e^{-s} g_1(e^s) = \frac{e^{-s}-1}{s},
\]
\begin{eqnarray} \label{twovar}
H(s, t)&=& -2 e^{-s-t} g_2(e^s, e^t) + \frac{3}{2} e^{-s-t} g_1(e^s) g_1(e^t)  \nonumber \\
&=& \frac{e^{-s-t} \left(\left(e^s-1\right)
   \left(3 e^t+1\right) t-\left(e^s+3\right)
   s \left(e^t-1\right)\right)}{2 s t (s+t)}. 
\end{eqnarray}
This formula matches with the one obtained in \cite{fatkha2} (up to the multiplicative factor $1/c=2 \pi^2$). 

\begin{theorem}
Let 
\[
\tilde k (s) = e^s k(s), \qquad \tilde H (s, t) = e^{s+t} H(s, t), 
\] 
where $k$ and $H$ are the functions in the final formula for the scalar curvature. 
We have 
\begin{equation} \label{related}
\tilde H(s, t) = 2 \frac{\tilde k (s+t)- \tilde k(s)}{t} + \frac{3}{2} \tilde k(s) \tilde k (t).
\end{equation}
\end{theorem}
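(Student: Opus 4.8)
The plan is to reduce the functional relation \eqref{related} to a single scalar identity between the auxiliary functions $g_1$ and $g_2$ of \eqref{gfunction}, and then settle that identity by a direct computation with their explicit closed forms.

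First I would record the elementary observation that
\[
\tilde k(s) = e^s k(s) = e^s\cdot\frac{e^{-s}-1}{s} = \frac{1-e^s}{s} = -\,g_1(e^s),
\]
so $\tilde k$ and $g_1\circ\exp$ differ only by a sign. The first line of \eqref{twovar} reads $H(s,t)=-2e^{-s-t}g_2(e^s,e^t)+\tfrac32 e^{-s-t}g_1(e^s)g_1(e^t)$, hence $\tilde H(s,t)=e^{s+t}H(s,t)=-2\,g_2(e^s,e^t)+\tfrac32\,g_1(e^s)g_1(e^t)=-2\,g_2(e^s,e^t)+\tfrac32\,\tilde k(s)\tilde k(t)$. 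Thus the product term $\tfrac32\,\tilde k(s)\tilde k(t)$ in \eqref{related} already matches identically, and the whole theorem becomes equivalent to the assertion
\[
-2\,g_2(e^s,e^t) = 2\,\frac{\tilde k(s+t)-\tilde k(s)}{t}, \qquad\text{i.e.,}\qquad g_2(e^s,e^t) = \frac{g_1(e^{s+t})-g_1(e^s)}{t},
\]
the second form using $\tilde k(s+t)-\tilde k(s)=g_1(e^s)-g_1(e^{s+t})$.

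Next I would verify this last identity straight from \eqref{gfunction}. Putting $u=e^s$, $v=e^t$, so that $\log u=s$, $\log v=t$, $\log u+\log v=s+t$, the left-hand side $g_2(e^s,e^t)$ becomes $\frac{s\,e^s(e^t-1)-t(e^s-1)}{s\,t\,(s+t)}$; on the other hand, putting $\frac1t\big(\frac{e^{s+t}-1}{s+t}-\frac{e^s-1}{s}\big)$ over the common denominator $s\,t\,(s+t)$ produces exactly the same numerator. Hence both sides of \eqref{related} agree as meromorphic functions on the complement of the hyperplanes $s=0$, $t=0$, $s+t=0$. Since every function occurring — $k$, $\tilde k$, $H$, $\tilde H$, and $g_1\circ\exp$, $g_2\circ(\exp\times\exp)$ — extends holomorphically across those hyperplanes (the apparent poles are removable, the numerators vanishing there), the identity propagates to all of $\mathbb{C}^2$ by continuity, proving \eqref{related}.

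I do not expect a genuine obstacle: after the bookkeeping reduction the proof is a one-line polynomial check, the only point needing a word of care being the removability of the apparent singularities. If a more conceptual statement is wanted, I would add the integral representation $g_1(e^s)=\int_0^1 e^{xs}\,dx$, which exhibits $\big(g_1(e^{s+t})-g_1(e^s)\big)/t=\int_0^1 e^{xs}\,\frac{e^{xt}-1}{t}\,dx=\int_0^1\!\!\int_0^1 x\,e^{x(s+ty)}\,dy\,dx$ as a divided difference of the one-variable function $g_1$; this is the structural reason why the two-variable curvature function in \eqref{SCformula} is forced to be a combination of a finite difference and a finite product of the one-variable function, as advertised in the introduction.
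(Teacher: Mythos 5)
Your proof is correct and follows essentially the same route as the paper: both reduce \eqref{related} to the identity $g_2(u,v)=\frac{1}{\log v}\bigl(g_1(uv)-g_1(u)\bigr)$, which the paper obtains from the integral representation $g_2(u,v)=\int_0^1 s\,u^s g_1(v^s)\,ds$ and you verify by direct algebra on the closed forms of \eqref{gfunction}. The only difference is cosmetic, and your closing remark about the divided-difference structure is exactly the paper's point.
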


\begin{proof}
It follows from \eqref{twovar} and the following relation between the functions introduced in \eqref{gfunction}:
\begin{eqnarray}
g_2(u, v) &=& \int_0^1 s u^s  g_1(v^s) \, ds 
= \frac{1}{\log (v)} \Big ( \frac{uv-1}{\log ( uv)} - \frac{u-1}{\log (u)} \Big ) \nonumber \\
&=&\frac{1}{\log (v)} \big ( g_1(uv) - g_1(u) \big ). \nonumber
\end{eqnarray}
\end{proof}

\section{Projections and the Scalar Curvature} \label{PRsection}

Similar to the illustration in \cite{conmos2} of the scalar curvature of $\mathbb{T}_\theta^2$ for 
dilatons associated with projections, we consider dilatons of the form $h= s p$, where $s \in \mathbb{R}$ and 
$p=p^*=p^2 \in C^\infty(\mathbb{T}_\Theta^4)$ is an arbitrary projection, and simplify the expression \eqref{SCformula} 
for these cases. We shall also study the behaviour of the functions of the parameter $s$ that appear in the final formula.

\begin{proposition} \label{SCprojection}
Let $p=p^*=p^2 \in C^\infty(\mathbb{T}_\Theta^4)$ be a projection. For the dilaton $h= s p$, $s \in \mathbb{R}$, 
the formula for the scalar curvature reduces to 

\begin{eqnarray}
R=e^{-s p} \big ( f_1(s)  \triangle(p) + f_2(s)  \triangle(p) p  + f_3(s) p \triangle(p)
+  f_4(s)   p \triangle(p) p \big ), \nonumber
\end{eqnarray}
where $\triangle = \sum_{i=1}^4 \delta_i^2$ and
\begin{eqnarray}
&&f_1(s)=\frac{1}{4} (-2 \sinh (s)+\cosh (s)-1), \qquad f_2(s)= \frac{1}{2}\sinh ^2\left(\frac{s}{2}\right), \nonumber \\
&& f_3(s)= \frac{-s+\sinh (s)}{2}-\frac{\cosh (s)-1}{4}, \qquad f_4(s)= s-\sinh (s). \nonumber
\end{eqnarray}
\end{proposition}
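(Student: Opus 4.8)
The plan is to specialize the general curvature formula \eqref{SCformula} to the dilaton $h = sp$ and track how the modular operator $\nabla = -\mathrm{ad}_h$ acts. First I would observe that since $p$ is a projection, $\mathrm{ad}_h = s\,\mathrm{ad}_p$ acts on the relevant elements in a very controlled way: for any $a$, one has $p a p$, $p a (1-p)$, $(1-p) a p$, $(1-p) a (1-p)$ as the four "matrix corners," and $\mathrm{ad}_p$ annihilates the diagonal corners while acting as $+1$ on $(1-p)a p$ type terms and $-1$ on $p a (1-p)$ type terms (up to a sign convention one must fix carefully). Concretely, applying this to $a = \delta_i(h) = s\,\delta_i(p)$ and to $a = \delta_i^2(h) = s\,\delta_i^2(p)$, any analytic function $F(\nabla)$ applied to these elements becomes a finite sum over the corners, with $F$ evaluated at the eigenvalues $0$ and $\pm s$. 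So the strategy is: decompose $\delta_i(p)$ and $\delta_i^2(p)$ into corners, push $k(\nabla)$ and $H(\nabla,\nabla)$ through, and collect.

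Next I would need the projection identities that reduce the number of independent terms. Differentiating $p = p^2$ gives $\delta_i(p) = \delta_i(p)p + p\delta_i(p)$, hence $p\,\delta_i(p)\,p = 0$ and $(1-p)\delta_i(p)(1-p) = 0$; thus $\delta_i(p)$ lives entirely in the off-diagonal corners, i.e. $\delta_i(p) = p\delta_i(p)(1-p) + (1-p)\delta_i(p)p$. This means $k(\nabla)$ applied to $\sum_i \delta_i^2(h)$ and $H(\nabla,\nabla)$ applied to $\sum_i \delta_i(h)^2$ only ever sees the eigenvalues $\pm s$ on the relevant pieces (and $0$ coming from re-diagonalizing products like $\delta_i(p)\cdot\delta_i(p)$, which lands in the diagonal corners). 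For the quadratic term $\sum_i \delta_i(h)^2 = s^2\sum_i \delta_i(p)^2$, I would write $\delta_i(p)^2 = p\,\delta_i(p)(1-p)\delta_i(p)\,p + (1-p)\delta_i(p)\,p\,\delta_i(p)(1-p)$, so $H(\nabla,\nabla)$ acts on the first block with $(s,-s) \mapsto$ total weight via the two-variable spectral calculus on the outer two factors, giving $H(s,-s)$ or $H(-s,s)$ depending on the block. For the Laplacian term one needs $\delta_i^2(p)$, which by differentiating the relation again splits as a diagonal part ($p\delta_i^2(p)p + (1-p)\delta_i^2(p)(1-p)$, eigenvalue $0$) plus an off-diagonal part (eigenvalue $\pm s$), and one re-expresses the diagonal part using $\delta_i^2(p) = \delta_i^2(p)p + p\delta_i^2(p) + 2\delta_i(p)\delta_i(p) - \ldots$ type identities so that everything ultimately reduces to $\triangle(p)$, $\triangle(p)p$, $p\triangle(p)$, $p\triangle(p)p$ as claimed (using also $\sum_i \delta_i(p)^2$ relating back to $\triangle(p)$ via $\triangle(p) = \triangle(p)p + p\triangle(p) + 2\sum_i\delta_i(p)^2$, so $\sum_i \delta_i(p)^2 = \tfrac12(\triangle(p) - \triangle(p)p - p\triangle(p))$, and sandwiching with $p$).

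Then the bookkeeping is: each of the four output terms $\triangle(p)$, $\triangle(p)p$, $p\triangle(p)$, $p\triangle(p)p$ collects a coefficient built from $k(0)=1$, $k(s)=\frac{e^{-s}-1}{s}$, $k(-s)=\frac{e^{s}-1}{-s}$, and $H(\pm s,\mp s)$, $H(0,0)$, etc. From \eqref{twovar}, $H(s,-s) = \lim$ is a genuine value (the $s+t$ in the denominator forces an L'Hôpital-type evaluation, but $\tilde H(s,t) = 2\frac{\tilde k(s+t)-\tilde k(s)}{t} + \frac32 \tilde k(s)\tilde k(t)$ from \eqref{related} makes this clean: $\tilde H(s,-s) = 2\frac{\tilde k(0)-\tilde k(s)}{-s} + \frac32\tilde k(s)\tilde k(-s) = 2\frac{\tilde k(s) - 1}{s} + \frac32 \tilde k(s)\tilde k(-s)$, and $\tilde k(s) = e^s k(s) = \frac{1 - e^s}{-s}\cdot$, actually $\tilde k(s) = e^s\cdot\frac{e^{-s}-1}{s} = \frac{1-e^s}{s}$). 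Substituting and simplifying each coefficient into hyperbolic functions should produce exactly $f_1, f_2, f_3, f_4$; for instance the $p\triangle(p)p$ coefficient, which receives contributions only from the eigenvalue-$0$ channel weighted against $k$ and $H$ at $\pm s$, should collapse to $s - \sinh s = f_4(s)$. The overall $\pi^2 e^{-h}$ prefactor from \eqref{SCformula} combined with $1/c = 2\pi^2$ gives the stated $e^{-sp}$ with the numerical coefficients absorbed, modulo fixing the normalization convention as in the remark that \eqref{SCformula} "matches \cite{fatkha2} up to $1/c = 2\pi^2$."

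The main obstacle I expect is the careful handling of the re-diagonalization: products such as $g_1(\Delta)(\delta_i(h))\,g_1(\Delta)(\delta_i(h))$ in \eqref{SCformula} are products of two off-diagonal elements and hence land in the \emph{diagonal} corners, so the outer $e^{-h}\Delta^{-1}(\cdot)$ — equivalently $e^{-h} k(\nabla)$ factored differently — must be tracked with the right eigenvalue ($0$, not $\pm 2s$), and similarly $g_2(\Delta,\Delta)$ acting on $\delta_i(h)\delta_i(h)$ requires knowing which of its two slots sees which eigenvalue. Getting these corner assignments and the associated spectral-parameter evaluations right — and then verifying that the four scalar coefficients simplify to the clean hyperbolic expressions — is the delicate part; the rest is routine once the $2\times 2$ matrix-corner formalism is set up.
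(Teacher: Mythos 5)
Your proposal follows essentially the same route as the paper's proof: decompose $\triangle(p)$ and $\delta_i(p)$ into the four corners determined by $p$, on which $\nabla=-\mathrm{ad}_{sp}$ acts with eigenvalues $0,\pm s$, evaluate $k$ and $H$ there (using the relation \eqref{related} to handle $H(s,-s)$, $H(-s,s)$), and convert back via $\delta_i(p)=\delta_i(p)p+p\delta_i(p)$ and $2\sum_i\delta_i(p)^2=(1-p)\triangle(p)-\triangle(p)p$. The only caveats are bookkeeping ones you already flag yourself (the sign of $\mathrm{ad}_p$ on each off-diagonal corner, and the overall normalization $c\pi^2=\tfrac12$), so the plan is sound and matches the paper.
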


\begin{proof}
Our method is quite similar to the one used in \cite{conmos2}. That is, we first use the identity 
\[
\triangle(p) = p \triangle(p) p + p \triangle(p) (1-p) + (1-p) \triangle(p) p + (1-p) \triangle(p) (1-p), 
\]
to decompose $\triangle (p) $ to the sum of eigenvectors of $\nabla =- \textnormal{ad}_{sp}$ with eigenvalues 
$0, -s, s, 0$. Therefore 
\begin{eqnarray}
k(\nabla)( \triangle(h) ) &=& s k(\nabla) (\triangle(p)) \nonumber \\
&=& sk(0) \big ( p \triangle(p) p + (1-p) \triangle(p) (1-p)  \big ) \nonumber \\
&&+ s k(-s) \big (  p \triangle(p) (1-p) \big )
+s k(s) \big (  (1-p) \triangle(p) p \big ) \nonumber \\
&=& - s \big ( p \triangle(p) p + (1-p) \triangle(p) (1-p)  \big ) \nonumber \\
&&+( 1-e^s)   \big (  p \triangle(p) (1-p) \big ) + (e^{-s}-1) \big (  (1-p) \triangle(p) p \big ). \nonumber
\end{eqnarray}
Then, using the identity $\delta_i (p) = \delta_i(p) p + p \delta_i(p)$, one can see that 
\begin{eqnarray}
&&H(\nabla, \nabla)(\delta_i(h) \delta_i(h)) \nonumber \\ 
&&= \frac{s^2}{2} \Big (\big ( H(s, -s)+H(-s, s) \big ) + \big ( H(s, -s)- H(-s, s) \big ) (1-2 p) \Big ) \big (\delta_i(p) \delta_i(p) \big ) \nonumber \\
&&=\frac{s^2}{2} \Big ( \frac{2 (\cosh (s)-1)}{s^2} + \frac{4 (s-\sinh (s))}{s^2} (1-2 p)   \Big ) \big (\delta_i(p) \delta_i(p) \big ) \nonumber \\
&&= \Big (  (\cosh (s)-1)+ 2 (s-\sinh (s)) (1-2 p)   \Big ) \big (\delta_i(p) \delta_i(p) \big ) \nonumber \\
&& = \big ( 2 s-2 \sinh (s)+\cosh (s)-1  -4 (s - \sinh(s)) p \big ) \big (\delta_i(p) \delta_i(p) \big ).  \nonumber
\end{eqnarray}
Using the identity $2 \sum \delta_i (p)^2 = (1-p) \triangle(p)- \triangle(p)p,$ we sum the above expressions 
and find that the formula \eqref{SCformula}, for the dilaton $h= s p$, reduces to 
\begin{eqnarray}
&&\frac{1}{2} (-2 \sinh (s)+\cosh (s)-1) \triangle(p) 
+ (-s+\sinh (s)-\frac{\cosh (s)}{2}+\frac{1}{2}) p \triangle(p) \nonumber \\
&& + \sinh ^2\left(\frac{s}{2}\right) \triangle(p) p 
+ 2 (s-\sinh (s)) p \triangle(p) p, \nonumber
\end{eqnarray}
up to multiplication from left by $e^{-h} = e^{-s p}$. 

\end{proof}

In contrast to the two dimensional case (cf. \cite{conmos2}), the functions of the 
variable $s \in \mathbb{R}$ that appear in the statement of Proposition \ref{SCprojection} are not bounded 
as they tend to $\pm \infty$ as $|s| \to  \infty$. The graphs of these functions are given below 
and some relations between these functions are investigated. First we graph $f_1$. 
\vskip 0.5 cm
\begin{center}
\includegraphics[scale=0.5]{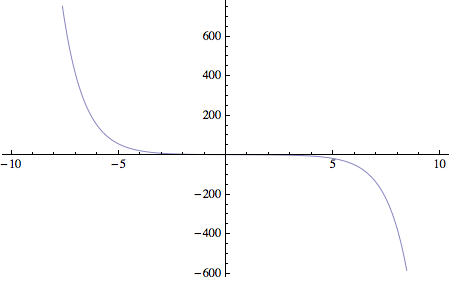}\\{Graph of the function $f_1(s)=\frac{1}{4} (-2 \sinh (s)+\cosh (s)-1)$.}
\label{GofT}
\end{center}

\vskip 0.5cm

Among these functions, $f_2$ is the only one that is bounded below, whereas the 
other functions are neither bounded above nor bounded below. In fact $f_2$ is a non-negative even function.
\begin{center}
\includegraphics[scale=0.5]{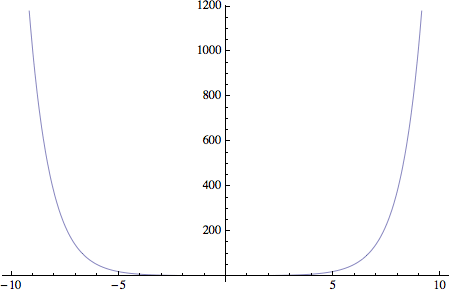}\\{Graph of the function $f_2(s)=\frac{1}{2} \sinh ^2\left(\frac{s}{2}\right)$.}
\label{GofT}
\end{center}
\vskip 0.5cm

The function $f_3$, similar to $f_1$,  does not satisfy any symmetry properties.  
\begin{center}
\includegraphics[scale=0.5]{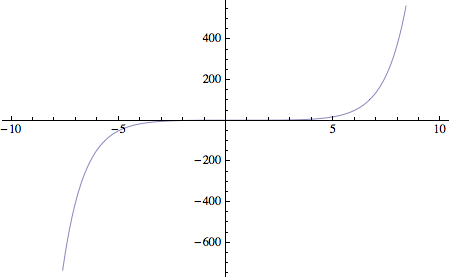}\\{Graph of the function $f_3(s)=\frac{1}{2} (\sinh (s)-s)+\frac{1}{4}
   (1-\cosh (s))$.}
\label{GofT}
\end{center}

\vskip 0.5cm
 
 The last function $f_4$ is obviously an odd function whose graph is given here: 
\begin{center}
\includegraphics[scale=0.5]{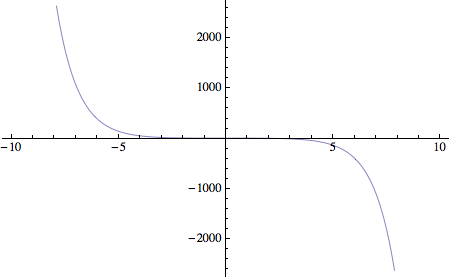}\\{Graph of the function $f_4(s)=2 (s-\sinh (s))$.}
\label{GofT}
\end{center}
\vskip 0.5cm
It is interesting to observe that these functions, which describe the scalar curvature for the dilaton $h= s p$, where $p$ is  
an arbitrary projection, satisfy the following relations: 

\[
f_1(s)+f_1(-s)=f_2(s)+f_2(-s)=- \big ( f_3(s)+f_3(-s)\big ) =-\sinh ^2\left(\frac{s}{2}\right), 
\]
\[
f_3(s)-f_3(-s)=-\frac{1}{2}  f_4(s) = \sinh (s)-s.
\]

\section{Gradient of the Einstein-Hilbert Action} \label{GRsection}

Denoting the Einstein-Hilbert action associated with the dilaton 
$h=h^* \in C^\infty(\mathbb{T}_\Theta^4)$ by $\Omega(h)$, we compute its 
gradient, namely an explicit formula for an element ${\rm Grad}_h \Omega$ that 
represents the derivative at $\varepsilon = 0$ of $\Omega(h+ \varepsilon a)$, 
where $h, a$ are selfadjoint smooth elements. 
The final formula for the gradient is expressed in terms of finite differences 
of the function $T$, obtained in the proof of Theorem 5.3 of \cite{fatkha2}. 
We recall from the proof of this theorem that 
\begin{eqnarray} \label{combined}
\Omega(h)=\varphi_0(R)=   
\sum_{i=1}^4  \varphi_0 \big ( e^{-h} T(\nabla)(\delta_i(h) ) 
\delta_i(h) \big ), \nonumber
\end{eqnarray}
where
\[
T(s)=\frac{-2 s+e^s-e^{-s} (2 s+3)+2}{4 s^2}.  
\]
The fact that this function is non-negative played a crucial role in identifying the extrema of the 
Einstein-Hilbert action in \cite{fatkha2}.

\begin{theorem} 
For any selfadjoint $h \in  C^\infty(\mathbb{T}_\Theta^4)$, we have 
\[
{\rm Grad}_h \Omega = \sum_{i=1}^4 \Big ( e^{-h} \omega_1(\nabla) (\delta_i^2(h)) + 
e^{-h} \omega_2(\nabla, \nabla) (\delta_i(h)^2) \Big ),   
\]
where 
\[
\omega_1(s)=\frac{-2 \sinh (s)+\sinh (2 s)-\cosh (2 s)+1}{4 s^2},
\]
\[
\omega_2(s, t)= 
\]
\begin{center}
\begin{math}
\frac{1}{4 s^2 t^2 (s+t)^2}
(-s (-(s^2+s t+3 t^2) \sinh
   (s)+(s^2+5 s t+t^2) \cosh (s)-t^2)
   (\sinh (3 (s+t))+\cosh (3 (s+t)))+s ((s^2 (8
   t+5)+s t (12 t+17)+t^2 (4 t+5)) (\sinh (s)+\cosh
   (s))-4 s t^2-4 t^3-5 t^2) (\sinh (s+t)+\cosh
   (s+t))+(-(5 s^3+s^2 t (4 t+15)+2 s t^2 (2
   t+5)+5 t^3) (\sinh (s)+\cosh (s))+t^2 (-(s+t))-t^2
   (4 s (s+t-2)-5 t) (\sinh (2 s)+\cosh (2 s))+t (s+t) (2
   s+t) (\sinh (3 s)+\cosh (3 s))) (\sinh (s+2
   t)+\cosh (s+2 t))+s (s \sinh (s)+s \cosh (s)+t) ((s+t)
   (\sinh (s)+\cosh (s))-t)) (\cosh (3 s+2 t)-\sinh (3
   s+2 t)).
   \end{math}
\end{center}

\begin{proof}

We have 

\begin{eqnarray}
&&\Omega(h+ \varepsilon a ) \nonumber \\ 
&&= \sum_{i=1}^4 \varphi_0 \big (  e^{-h - \varepsilon a }  \, T(\nabla_{h+ \varepsilon a}) 
(\delta_i(h+ \varepsilon a) ) \delta_i(h+ \varepsilon a) \big ) \nonumber \\ 
&&= 
\sum_{i=1}^4 \Big (  \varphi_0 \big (  e^{-h - \varepsilon a }  \, T(\nabla_{h+ \varepsilon a}) (\delta_i(h) ) \delta_i(h) \big ) + \varepsilon  \varphi_0 \big (  e^{-h - \varepsilon a }  \, T(\nabla_{h+ \varepsilon a}) (\delta_i(a) ) \delta_i(h) \big ) \nonumber \\
&&\quad  + \varepsilon  \varphi_0 \big (  e^{-h - \varepsilon a }  \, T(\nabla_{h+ \varepsilon a}) (\delta_i(h) ) \delta_i(a) \big )  + \varepsilon^2  \varphi_0 \big (  e^{-h - \varepsilon a }  \, T(\nabla_{h+ \varepsilon a}) (\delta_i(a) ) \delta_i(a) \big )  \Big ).  \nonumber 
\end{eqnarray}

Therefore 

\begin{eqnarray}
&&\frac{d}{d \varepsilon} \arrowvert_{\varepsilon=0} \Omega(h+ \varepsilon a ) \nonumber \\
&&=   \varphi_0 \big ( (\frac{d}{d \varepsilon} \arrowvert_{\varepsilon=0} e^{-h-\varepsilon a}) \, T(\nabla) (\delta(h)) \delta(h)  \big ) + 
\varphi_0 \big (  e^{-h} \, \frac{d}{d \varepsilon} \arrowvert_{\varepsilon=0} T(\nabla_{h+\varepsilon a} ) (\delta(h)) \delta(h)  \big ) \nonumber \\ 
&& \qquad + \varphi_0 \big (  e^{-h}  T(\nabla) (\delta(h)) \delta(a)  \big )+ \varphi_0 \big (  e^{-h}  T(\nabla) (\delta(a)) \delta(h)  \big ). \nonumber
\end{eqnarray}
Using the following lemmas we obtain the  explicit formula,  
\[
{\rm Grad}_h \Omega = \sum_{i=1}^4 \Big (e^{-h} \omega_1(\nabla) (\delta_i^2(h)) + 
e^{-h} \omega_2(\nabla, \nabla) (\delta_i(h) \delta_i(h)) \Big ), 
\]
where
\[ 
\omega_1(s) = - T(s) - T(-s) e^{-s}, 
\]
\begin{eqnarray}
\omega_2(s, t) &=& E(s, t) + L(s, t) - P(s, t) - Q(s, t) \nonumber \\
&=& \frac{e^{-s-t}-1}{s+t} T(s) +  e^{-s -t} \Big ( \frac{T(-t)-T(s)}{s+t} + \frac{T(t)-T(-s)}{s+t} e^t \Big ) \nonumber \\
&& - \Big (T(s+t) \frac{e^{-s}-1}{s}+\frac{T(t)-T(s+t)}{s} e^{-s}+\frac{T(s+t)-T(s)}{t} \Big ) \nonumber \\ 
&& - \Big ( T(-s-t) e^{-s-t} \frac{e^{-s}-1}{s} + \frac{T(t)-T(s+t)}{s} e^{-s} + \frac{T(s+t)-T(s)}{t} \Big ). \nonumber
\end{eqnarray}
Then one can find the above explicit functions in the statement of the theorem by direct computer assisted computations. 
\end{proof}

\end{theorem}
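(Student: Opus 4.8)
The plan is to differentiate $\Omega(h+\varepsilon a)$ in $\varepsilon$ at $\varepsilon=0$, starting from the formula $\Omega(h)=\sum_{i=1}^4\varphi_0\big(e^{-h}T(\nabla)(\delta_i(h))\delta_i(h)\big)$ recalled above, and then to massage each resulting summand into the shape $\varphi_0(a\,X)$ by repeated use of the trace property $\varphi_0(xy)=\varphi_0(yx)$ and of $\varphi_0\circ\delta_i=0$; the element ${\rm Grad}_h\Omega$ is then $\sum_i X_i$ by definition. As displayed in the excerpt, differentiation produces four contributions: (i) the variation $\frac{d}{d\varepsilon}\big|_0 e^{-h-\varepsilon a}$ of the Weyl factor; (ii) the variation $\frac{d}{d\varepsilon}\big|_0 T(\nabla_{h+\varepsilon a})$ of the modular operator-function, since $\nabla_{h+\varepsilon a}=\nabla-\varepsilon\,\mathrm{ad}_a$; and (iii), (iv) the two terms $\varphi_0\big(e^{-h}T(\nabla)(\delta_i(h))\delta_i(a)\big)$ and $\varphi_0\big(e^{-h}T(\nabla)(\delta_i(a))\delta_i(h)\big)$ in which $\delta_i$ lands on $a$. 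Each contributes one piece to $\omega_1$ and one piece to $\omega_2$; the four two-variable functions $E,L,P,Q$ appearing in the proof are precisely the contributions of (i)--(iv) to $\omega_2$.

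For (i) I would invoke Duhamel's formula, $\frac{d}{d\varepsilon}\big|_0 e^{-h-\varepsilon a}=-\int_0^1 e^{-uh}a\,e^{-(1-u)h}\,du=-e^{-h}\,\frac{1-e^{-\nabla}}{\nabla}(a)$, substitute it into the trace, and cycle $a$ to the front; expanding $T(\nabla)(\delta_i(h))\delta_i(h)$ and collecting yields the piece $E(s,t)=\frac{e^{-s-t}-1}{s+t}T(s)$ of $\omega_2$ together with its contribution to $\omega_1$. For (ii) the essential input is the Daletskii--Krein (divided-difference) formula for the derivative of a function of an operator along a linear path: $\frac{d}{d\varepsilon}\big|_0 T(\nabla-\varepsilon\,\mathrm{ad}_a)(x)=-T^{[1]}(\nabla_\ell,\nabla_r)\big(\mathrm{ad}_a(x)\big)$, where $T^{[1]}(s,t)=\frac{T(s)-T(t)}{s-t}$ and $\nabla_\ell,\nabla_r$ are the two commuting copies of $\nabla$ acting on the left and right tensor slots; writing $\mathrm{ad}_a(x)=[a,x]$ and again using the trace property and $\varphi_0\circ\delta_i=0$, this term collapses to the piece $L(s,t)$ of $\omega_2$.

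For (iii) and (iv) I would integrate by parts: from $\varphi_0\circ\delta_i=0$ and the Leibniz rule one has $\varphi_0\big(y\,\delta_i(a)\big)=-\varphi_0\big(\delta_i(y)\,a\big)$, which strips $\delta_i$ off $a$ at the cost of differentiating $e^{-h}$ and the functional calculus $T(\nabla)$. The derivative of $e^{-h}$ is handled by the identities recalled just before \eqref{gfunction} (in the version with $e^{-h}$ in place of $e^{h}$), and passing $\delta_i$ through $T(\nabla)$ uses $[\delta_i,\nabla]=-\mathrm{ad}_{\delta_i(h)}$, which once more produces a divided-difference two-variable function of $\nabla$. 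Carrying this out yields the remaining pieces $P(s,t)$ and $Q(s,t)$ of $\omega_2$ and their $\omega_1$-contributions. Summing (i)--(iv) and grouping terms according to whether they carry a factor $\delta_i^2(h)$ or $\delta_i(h)^2$ gives the intermediate formulas $\omega_1(s)=-T(s)-T(-s)e^{-s}$ and $\omega_2=E+L-P-Q$ stated in the proof; substituting $T(s)=\frac{-2s+e^s-e^{-s}(2s+3)+2}{4s^2}$ and simplifying (by computer algebra, as the paper does) produces the closed forms.

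The main obstacle is the bookkeeping in steps (iii)--(iv): one must track scrupulously which copy of $\nabla$ (left or right) sits in each argument slot after $\delta_i$ has been commuted through $T(\nabla)$, through $g_1(\Delta)$, and through $e^{-h}$, since a misplaced argument or sign propagates into the enormous final expression for $\omega_2$ and escapes any quick sanity check. A related delicate point is the symmetrization required to merge the several $\delta_i(h)^2$-type contributions into a single two-variable function: because $\delta_i(h)$ does not commute with the functional calculus, one must symmetrize using the trace, and one should check that the resulting ``one-variable plus two-variable'' normal form is the one that legitimately represents ${\rm Grad}_h\Omega$ — uniqueness following from faithfulness of $\varphi_0$ together with the freeness of the functional-calculus representation on the off-diagonal eigenspaces of $\nabla$.
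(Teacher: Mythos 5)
Your proposal follows essentially the same route as the paper: the same four-term decomposition of $\frac{d}{d\varepsilon}\big|_{0}\Omega(h+\varepsilon a)$ into the Weyl-factor variation, the variation of $T(\nabla_{h+\varepsilon a})$, and the two integration-by-parts terms, yielding exactly the paper's $\omega_1=-T(s)-T(-s)e^{-s}$ and $\omega_2=E+L-P-Q$ before the final computer-assisted simplification. The only cosmetic difference is that you justify the derivative of $T(\nabla_{h+\varepsilon a})$ by the Daletskii--Krein divided-difference formula where the paper uses the equivalent Fourier-transform identity from \cite{conmos2}, and your Duhamel expression $e^{-h}\frac{e^{-\nabla}-1}{\nabla}(a)$ agrees with the paper's $\frac{1-e^{\nabla}}{\nabla}(a)e^{-h}$ after commuting $e^{-h}$ through the functional calculus.
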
  

For simplicity in the notation, in the following lemmas, $\delta$ can be taken to be any of the canonical derivations $\delta_i$ 
introduced in \S \ref{preliminaries}.  The proofs follow closely the 
techniques given in \cite{conmos2} for the computation of the gradient of linear functionals similar to $\Omega$ (see also \cite{les}). 

\begin{lemma}
We have
\[
\varphi_0 \big (    (\frac{d}{d \varepsilon} \arrowvert_{\varepsilon=0} e^{-h-\varepsilon a})  G(\nabla)(x) x  \big )
= 
\varphi_0 \big (  a e^{-h} E(\nabla, \nabla) (x x) \big ), 
\]
where 
\[
E(s, t)= \frac{e^{-s-t}-1}{s+t} G(s). 
\]
\begin{proof}
Using 
\[
\frac{d}{d \varepsilon}{\big |}_{\varepsilon=0} e^{-h- \varepsilon a} = \frac{1- e^\nabla}{\nabla}(a) e^{-h}, 
\]
we have 
\begin{eqnarray}
\varphi_0 \big (    (\frac{d}{d \varepsilon} \arrowvert_{\varepsilon=0} e^{-h-\varepsilon a})  G(\nabla)(x) x  \big )
&=& \varphi_0 \big ( \frac{1- e^\nabla}{\nabla}(a) e^{-h} G(\nabla)(x) x  \big )  \nonumber \\ 
&=& \varphi_0 \big ( a e^{-h} \,\frac{e^{-\nabla}-1}{\nabla} (G(\nabla)(x)x)\big ) \nonumber \\
&=& \varphi_0 \big (  a e^{-h} E(\nabla, \nabla) (x x) \big ). \nonumber
\end{eqnarray}
\end{proof}
\end{lemma}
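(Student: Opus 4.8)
The plan is to establish the Lemma by the standard ``conjugation trick'' used throughout this circle of ideas (cf. \cite{conmos2}): reduce everything to a computation inside the GNS space of $\varphi_0$, where the trace property of $\varphi_0$ and the explicit action of the modular data $\nabla = -\mathrm{ad}_h$ let one move operators past the trace at the cost of replacing a one-variable function of $\nabla$ by a two-variable function evaluated on a product. First I would record the variation formula for the exponential, $\frac{d}{d\varepsilon}\big|_{\varepsilon=0} e^{-h-\varepsilon a} = \frac{1-e^{\nabla}}{\nabla}(a)\, e^{-h}$; this is the Duhamel formula $\frac{d}{d\varepsilon}\big|_{\varepsilon=0} e^{-h-\varepsilon a} = -\int_0^1 e^{-u h}\, a\, e^{-(1-u)h}\, du$, rewritten by pulling $e^{-h}$ to the right and recognizing $-\int_0^1 e^{u\,\mathrm{ad}_h}\,du$ applied to $a$ as $\frac{1-e^{\nabla}}{\nabla}(a)$ with $\nabla=-\mathrm{ad}_h$. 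I would state this as the one displayed ingredient and cite it to \cite{conmos2}.

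Next I would substitute this into the left-hand side and use the trace property of $\varphi_0$ to cyclically permute $\frac{1-e^{\nabla}}{\nabla}(a)$ to the front; since $\varphi_0(e^{ith}\,b\,e^{-ith}) = \varphi_0(b)$, conjugation by $e^{th}$ is $\varphi_0$-preserving, and its infinitesimal form gives the adjoint identity $\varphi_0\big(F(\nabla)(b)\, c\big) = \varphi_0\big(b\, F(-\nabla)(c)\big)$ for any (suitable) function $F$. Applying this with $F(s) = \frac{1-e^{s}}{s}$ turns the factor into $\frac{e^{-\nabla}-1}{\nabla}$ acting on the remaining product $G(\nabla)(x)\,x$, all now sitting inside $\varphi_0\big(a\, e^{-h}\,(\,\cdot\,)\big)$. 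Finally I would invoke the elementary ``functional calculus on products'' rule: if $\nabla$ acts as a derivation then for eigen-decomposed elements $x = \sum x_\lambda$ (with $\nabla(x_\lambda) = \lambda x_\lambda$) one has $\nabla(y z) = \nabla(y)z + y\nabla(z)$, so $\frac{e^{-\nabla}-1}{\nabla}\big(G(\nabla)(x)\,x\big)$ is exactly $E(\nabla,\nabla)(x\,x)$ where $E(s,t) = \frac{e^{-s-t}-1}{s+t}\,G(s)$ — the first variable records the $\nabla$ that already hit the left factor via $G$, the second records the new one, and $\frac{e^{-(s+t)}-1}{s+t}$ is $\frac{e^{-\nabla}-1}{\nabla}$ evaluated on the total weight $s+t$. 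Chaining the three equalities yields the claim.

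The only genuinely delicate point is the passage from ``one-variable function of $\nabla$ applied to a product'' to ``two-variable function applied to the tensor-like product,'' i.e. making precise that $F(\nabla)(yz) = \tilde F(\nabla,\nabla)(y\otimes z)$ with $\tilde F(s,t)$ obtained from $F(s+t)$; this is routine on the dense span of eigenvectors of $\nabla$ and extends by continuity, but it is where one must be a little careful about domains and about the fact that $G$ itself may already be a function of $\nabla$, so that the composite weight bookkeeping ($G$ contributes a factor depending only on the left weight $s$, while $\frac{e^{-\nabla}-1}{\nabla}$ sees the sum) comes out as written. Everything else — the Duhamel formula and the $\varphi_0$-adjoint identity — is entirely standard and I would present it without belaboring the analytic details, exactly as the excerpt's proof does.
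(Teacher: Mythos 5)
Your argument is correct and follows essentially the same route as the paper's proof: the variation formula $\frac{d}{d\varepsilon}\big|_{\varepsilon=0}e^{-h-\varepsilon a}=\frac{1-e^{\nabla}}{\nabla}(a)e^{-h}$, the $\varphi_0$-adjoint identity $\varphi_0\big(F(\nabla)(a)\,c\big)=\varphi_0\big(a\,F(-\nabla)(c)\big)$ applied with $F(s)=\frac{1-e^{s}}{s}$, and the derivation property of $\nabla$ to convert $\frac{e^{-\nabla}-1}{\nabla}\big(G(\nabla)(x)x\big)$ into $E(\nabla,\nabla)(xx)$ with $E(s,t)=\frac{e^{-s-t}-1}{s+t}G(s)$. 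One small correction to your Duhamel aside: since $e^{-uh}ae^{uh}=e^{-u\,\mathrm{ad}_h}(a)=e^{u\nabla}(a)$, the relevant operator is $-\int_0^1 e^{-u\,\mathrm{ad}_h}\,du=\frac{1-e^{\nabla}}{\nabla}$ (you wrote $e^{u\,\mathrm{ad}_h}$), though the formula you ultimately state and use is the correct one.
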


\begin{lemma}
For any $x \in C^\infty(\mathbb{T}_\Theta^4)$, we have
\begin{eqnarray}
\varphi_0 \Big ( e^{-h} \frac{d}{d \varepsilon} \arrowvert_{\varepsilon=0} G(\nabla_{h+\varepsilon a})(x) x \Big ) = 
\varphi_0(a e^{-h}  \, L(\nabla, \nabla) (x  x )) \nonumber,
\end{eqnarray}
where 
\[
L(s, t) = e^{-s -t} \Big ( \frac{G(-t)-G(s)}{s+t} + \frac{G(t)-G(-s)}{s+t} e^t \Big ).
\]
\begin{proof}
Writing $G(v)=\int e^{-i t v} g(t) \, dt$ and using the following identity \cite{conmos2}  
\[ 
\frac{d}{d \varepsilon} \arrowvert_{\varepsilon=0} G(\nabla_{h+\varepsilon a}) = 
 \int_0^1 \int it \,\sigma_{ut} \,{\rm ad}_a \,\sigma_{(1-u)t} \,g(t) \,dt\, du, 
\]
we find that 
\[
\varphi_0 \Big ( e^{-h} \frac{d}{d \varepsilon} \arrowvert_{\varepsilon=0} G(\nabla_{h+\varepsilon a})(x) x \Big ) = 
\varphi_0(e^{-h} a \, L_0(\nabla, \nabla) (x  x )), 
\]
where 
\[
L_0(s, t) = \frac{G(-t)-G(s)}{s+t} + \frac{G(t)-G(-s)}{s+t} e^t.
\]
Therefore  
\[
\varphi_0 \Big ( e^{-h} \frac{d}{d \varepsilon} \arrowvert_{\varepsilon=0} G(\nabla_{h+\varepsilon a})(x) x \Big ) = 
\varphi_0( a e^{-h} \, L(\nabla, \nabla) (x  x )), 
\]
where
\[
L(s, t) = e^{-s-t} L_0(s, t). 
\]
\end{proof}

\end{lemma}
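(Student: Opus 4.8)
The plan is to adapt the Fourier-analytic argument used for $\mathbb{T}_\theta^2$ in \cite{conmos2}. First I would write $G$ as a Fourier transform, $G(v)=\int_{\mathbb{R}}e^{-itv}g(t)\,dt$, so that $G(\nabla_u)=\int_{\mathbb{R}}\sigma^{(u)}_t\,g(t)\,dt$, where $\sigma^{(u)}_t(b)=e^{itu}be^{-itu}$ and, for $u=h$, one has in the conventions of \S\ref{preliminaries} that $\sigma_t=\sigma^{(h)}_t=e^{-it\nabla}$ (so $\sigma_i=\Delta=e^{\nabla}$). I would then substitute the Duhamel-type identity
\[
\frac{d}{d\varepsilon}\Big|_{\varepsilon=0}G(\nabla_{h+\varepsilon a})=\int_0^1\!\!\int_{\mathbb{R}}it\,\sigma_{ut}\,\mathrm{ad}_a\,\sigma_{(1-u)t}\,g(t)\,dt\,du,
\]
apply the resulting operator to $x$, multiply on the right by $x$ and on the left by $e^{-h}$, and take $\varphi_0$. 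Expanding $\mathrm{ad}_a(y)=ay-ya$ and using $\sigma_{ut}\sigma_{(1-u)t}=\sigma_t$, the left-hand side becomes a double integral of $\varphi_0$ applied to the two monomials $e^{-h}\sigma_{ut}(a)\sigma_t(x)x$ and $e^{-h}\sigma_t(x)\sigma_{ut}(a)x$.

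Next I would use that $\varphi_0$ is a trace invariant under every inner automorphism $\sigma_s$, that $\sigma_s(e^{-h})=e^{-h}$, and the KMS-type rule for commuting elements past $e^{-h}$ inside $\varphi_0$ (which introduces $\Delta$-twists), to bring both monomials to the standard shape $\varphi_0\big(e^{-h}a\,\sigma_\alpha(x)\,\sigma_\beta(x)\big)$ with $\alpha,\beta$ affine in $ut$ and $(1-u)t$; the relative weight $e^{t}$ in $L_0$ comes precisely from the twist needed to push $a$ to the left in the second monomial. Carrying out the inner $u$-integral then reduces, after re-expressing the modular automorphisms through the spectral calculus of $\nabla$ acting on the two tensor legs that carry the two copies of $x$, to elementary integrals $\int_0^1 e^{i\alpha(u)s}\,du$, i.e.\ to divided differences; re-summing these against $g(t)\,dt$ converts the $t$-integrals back into values of $G$ and gives $\varphi_0\big(e^{-h}a\,L_0(\nabla,\nabla)(xx)\big)$ with $L_0(s,t)=\frac{G(-t)-G(s)}{s+t}+\frac{G(t)-G(-s)}{s+t}e^{t}$. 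Finally, moving $e^{-h}$ past $a$ via $\varphi_0(e^{-h}aY)=\varphi_0(aYe^{-h})=\varphi_0\big(ae^{-h}\Delta^{-1}(Y)\big)=\varphi_0\big(ae^{-h}e^{-\nabla}(Y)\big)$, and noting that $\nabla$ is a derivation so that $e^{-\nabla}$ acts on $L_0(\nabla,\nabla)(xx)$ as $e^{-\nabla}\otimes e^{-\nabla}$ on the two legs and therefore multiplies the symbol by $e^{-s-t}$, yields the asserted identity with $L(s,t)=e^{-s-t}L_0(s,t)$.

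The substitution of the Duhamel formula and the concluding $e^{-h}$-absorption are routine; the real obstacle is the middle step---keeping faithful track of the three modular automorphisms $\sigma_{ut}$, $\sigma_{(1-u)t}$ and $\sigma_t$ through the cyclic rearrangements and $\Delta$-twists under $\varphi_0$, and matching the $u$-integrals of the exponentials to precisely the two divided-difference kernels that reassemble into $L_0$ with coefficients $1$ and $e^{t}$. This is the ``delicate Fourier analysis'' the introduction alludes to; the algebra itself is elementary but bookkeeping-heavy, and a natural consistency check is that specializing $G$ to a simple exponential should reproduce the expected elementary answer, and that $L$ exhibits the same finite-difference structure as the kernels appearing in the gradient theorem.
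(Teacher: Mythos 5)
Your proposal follows exactly the paper's route: the Fourier representation of $G$, the Duhamel-type identity from Connes--Moscovici for $\frac{d}{d\varepsilon}\vert_{\varepsilon=0}G(\nabla_{h+\varepsilon a})$, cyclic rearrangement under $\varphi_0$ with modular twists to produce the divided-difference kernel $L_0$, and the final commutation of $e^{-h}$ past $a$ yielding the factor $e^{-s-t}$. In fact you supply more of the intermediate bookkeeping (the two monomials from $\mathrm{ad}_a$, the $u$-integration, the origin of the relative weight $e^{t}$) than the paper, which compresses that step into ``we find that''; your account is correct and consistent with the stated $L(s,t)$.
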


\begin{lemma}
 For any $x \in C^\infty(\mathbb{T}_\Theta^4)$ one has 
\[
\delta \big ( G(\nabla) (x) \big ) = G(\nabla)(\delta (x)) + M_1(\nabla, \nabla) (\delta(h) x) + M_2(\nabla, \nabla) (x \delta(h)), 
\] 
where 
\[
M_1(s, t)= \frac{G(t)-G(s+t)}{s}, \qquad M_2(s, t)=\frac{G(s+t)-G(s)}{t}. 
\]
\begin{proof}
It can be seen by writing $G(v)=\int e^{-i t v} g(t) \, dt$ and using the identity \cite{conmos2}
\[
\delta_i \sigma_t = \sigma_t \delta_i + it \int_0^1 \sigma_{ut} \, {\rm ad}_{\delta_i(h)} \sigma_{(1-u)t} \, du.
\]
\end{proof}
\end{lemma}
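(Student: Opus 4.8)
The plan is to reduce the whole statement to the single operator identity for $\delta\sigma_t$ quoted in the proof, and then carry out the Fourier/Duhamel bookkeeping that turns the inner $u$-integral into the two finite differences $M_1$ and $M_2$. First I would record the functional-calculus dictionary. Since $\sigma_t(a)=e^{ith}ae^{-ith}$ has generator $\frac{d}{dt}\sigma_t|_{t=0}=i\,[h,\,\cdot\,]=-i\nabla$, one has $\sigma_t=e^{-it\nabla}$, so writing $G(v)=\int e^{-itv}g(t)\,dt$ gives $G(\nabla)=\int\sigma_t\,g(t)\,dt$ and hence $G(\nabla)(x)=\int\sigma_t(x)\,g(t)\,dt$. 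I would also note the elementary identity $\int it\,e^{-itv}g(t)\,dt=-G'(v)$, which is precisely what converts the prefactor $it$ appearing in the $\delta\sigma_t$ identity into a derivative of $G$.

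Second, I would either invoke (it is attributed to \cite{conmos2}) or quickly re-derive the commutation identity $\delta\sigma_t=\sigma_t\delta+it\int_0^1\sigma_{ut}\,{\rm ad}_{\delta(h)}\,\sigma_{(1-u)t}\,du$. The self-contained route is Duhamel's formula: applying $\delta$ to $\sigma_t(a)=e^{ith}ae^{-ith}$ and using $\delta(e^{ith})=it\int_0^1 e^{uith}\delta(h)e^{(1-u)ith}\,du$ together with $e^{ith}\delta(e^{-ith})=-\delta(e^{ith})e^{-ith}$, one regroups to obtain $\delta(\sigma_t(a))-\sigma_t(\delta(a))=[\delta(e^{ith})e^{-ith}]\,\sigma_t(a)-\sigma_t(a)\,[\delta(e^{ith})e^{-ith}]$; recognizing $\delta(e^{ith})e^{-ith}=it\int_0^1\sigma_{ut}(\delta(h))\,du$ recovers the stated form.

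Third, which is the computational heart, I would simplify the inner term. A direct manipulation of the exponentials gives $\sigma_{ut}\,{\rm ad}_{\delta(h)}\,\sigma_{(1-u)t}(x)=\sigma_{ut}(\delta(h))\,\sigma_t(x)-\sigma_t(x)\,\sigma_{ut}(\delta(h))$, the key cancellation being $e^{iuth}\sigma_{(1-u)t}(x)e^{-iuth}=\sigma_t(x)$. Feeding this into $\delta(G(\nabla)(x))=\int\delta(\sigma_t(x))g(t)\,dt$ peels off the term $G(\nabla)(\delta(x))$ and leaves two products. Using the two-variable functional-calculus convention (the first argument of $M(\nabla,\nabla)$ acts on the left factor of a product, the second on the right factor), the surviving contribution splits as a symbol $\int_0^1\!\int it\,e^{-iuts}e^{-itr}g(t)\,dt\,du$ acting on $\delta(h)\,x$, plus a symbol $-\int_0^1\!\int it\,e^{-its}e^{-iutr}g(t)\,dt\,du$ acting on $x\,\delta(h)$, where $s,r$ denote the two calculus slots and $t$ is the Fourier variable.

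Finally, I would evaluate these two symbols. Using $\int it\,e^{-itv}g(t)\,dt=-G'(v)$, the first equals $\int_0^1 -G'(us+r)\,du=\frac{G(r)-G(s+r)}{s}$ by the fundamental theorem of calculus, which is $M_1(s,r)$; the second equals $\int_0^1 G'(s+ur)\,du=\frac{G(s+r)-G(s)}{r}$, which is $M_2(s,r)$. Relabeling the dummy slot $r\to t$ gives exactly the $M_1$ and $M_2$ of the statement acting on $\delta(h)x$ and $x\delta(h)$ respectively. The main obstacle I anticipate is not analytic but notational: keeping straight which copy of $\nabla$ acts on which factor when translating a product such as $\sigma_{ut}(\delta(h))\,\sigma_t(x)$ into a two-variable symbol, and matching the scaling variable $u$ to the slot whose argument then carries the finite-difference denominator. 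Getting the $u$-substitution to land on $s$ in the first term and on $t$ in the second is precisely what produces the asymmetry between $M_1$ and $M_2$.
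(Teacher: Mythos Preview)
Your proposal is correct and follows exactly the approach indicated in the paper: the paper's proof is only a two-line sketch (write $G$ via its Fourier transform and invoke the identity $\delta_i\sigma_t=\sigma_t\delta_i+it\int_0^1\sigma_{ut}\,{\rm ad}_{\delta_i(h)}\,\sigma_{(1-u)t}\,du$), and you have faithfully filled in the omitted bookkeeping. In particular your key simplification $\sigma_{ut}\,{\rm ad}_{\delta(h)}\,\sigma_{(1-u)t}(x)=\sigma_{ut}(\delta(h))\,\sigma_t(x)-\sigma_t(x)\,\sigma_{ut}(\delta(h))$ and the subsequent $u$-integrations producing the finite differences $M_1$ and $M_2$ are exactly the intended computation.
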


\begin{lemma}
We have 
\begin{eqnarray}
\varphi_0 \big ( e^{-h} G(\nabla)(\delta(h)) \delta(a) \big ) = - \varphi_0 \big ( a e^{-h} G(\nabla)(\delta^2(h)) \big ) 
- \varphi_0 \big ( a e^{-h} P(\nabla, \nabla) (\delta(h) \delta(h))  \big ), \nonumber
\end{eqnarray}
where 
\[
P(s, t)= G(s+t) \frac{e^{-s}-1}{s}+M_1(s, t)e^{-s}+M_2(s,t). 
\]
\begin{proof}
We start by writing 
\begin{eqnarray}
&& \varphi_0 \big ( e^{-h} G(\nabla)(\delta(h)) \delta(a) \big ) = - \varphi_0  \big (a\, \delta ( G(\nabla)( e^{-h} \delta(h)) )   \big ) \nonumber \\ 
 &&= - \varphi_0  \big (a\, G(\nabla)  ( \delta ( e^{-h} \delta(h)) )   \big ) - \varphi_0 \big( a M_1(\nabla, \nabla) (\delta(h) e^{-h} \delta(h))   \big ) \nonumber \\
&& \quad - \varphi_0 \big( a M_2(\nabla, \nabla) ( e^{-h} \delta(h) \delta(h))   \big ) \nonumber \\ 
&&= - \varphi_0  \big (a e^{-h}\, G(\nabla)  ( e^{h} \delta ( e^{-h}) \delta(h) )   \big ) - \varphi_0  \big (a\, G(\nabla)  (  e^{-h} \delta^2(h)) )   \big ) \nonumber \\
&&\quad - \varphi_0 \big( a e^{-h} M_1(\nabla, \nabla) (e^{-\nabla} (\delta(h))  \delta(h))   \big )  - \varphi_0 \big( a e^{-h}  M_2(\nabla, \nabla) ( \delta(h) \delta(h))   \big ). \nonumber 
\end{eqnarray}
Then, using the fact that $e^h \delta(e^{-h})= \frac{e^{-\nabla}-1}{\nabla}(\delta(h))$, one can find the above expression for $\varphi_0 \big ( e^{-h} G(\nabla)(\delta(h)) \delta(a) \big )$ in the statement of the lemma.
 
\end{proof}

\end{lemma}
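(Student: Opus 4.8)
The plan is to prove the identity by a single integration by parts against the tracial, $\delta$-invariant functional $\varphi_0$, followed by the Leibniz-type rule for $\delta\circ G(\nabla)$ from the preceding lemma and a reshuffling of the factors $e^{-h}$ through the functional calculus of $\nabla$. The three structural facts I will use throughout are: $\varphi_0$ is a trace with $\varphi_0\circ\delta=0$; $\nabla=-\textnormal{ad}_h$ is a derivation, so it acts on a product as the sum of its actions on the two factors, whence a one-variable symbol $G(\nabla)$ evaluated on a product $uv$ of $\nabla$-homogeneous elements of weights $s$ and $t$ returns $G(s+t)uv$; and, because $e^{-h}$ commutes with $h$, both left and right multiplication by $e^{-h}$ commute with $\nabla$, hence with every function of $\nabla$ and with the slotwise action of $M_1(\nabla,\nabla)$ and $M_2(\nabla,\nabla)$.

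First I would absorb the leading $e^{-h}$ into the functional calculus, writing $e^{-h}G(\nabla)(\delta(h))=G(\nabla)(e^{-h}\delta(h))$, and then integrate by parts: from $\varphi_0\big(\delta\big(G(\nabla)(e^{-h}\delta(h))\,a\big)\big)=0$ and traciality one obtains
\[
\varphi_0\big(e^{-h}G(\nabla)(\delta(h))\,\delta(a)\big)=-\,\varphi_0\big(a\,\delta\big(G(\nabla)(e^{-h}\delta(h))\big)\big).
\]
Next I would apply the preceding lemma with $x=e^{-h}\delta(h)$, replacing $\delta\big(G(\nabla)(x)\big)$ by $G(\nabla)(\delta(x))+M_1(\nabla,\nabla)(\delta(h)\,x)+M_2(\nabla,\nabla)(x\,\delta(h))$. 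This produces one $G(\nabla)\delta$ term together with the $M_1$ and $M_2$ terms that will account for the last two summands of $P$.

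The core of the argument is the $G(\nabla)\delta$ term. Expanding $\delta(e^{-h}\delta(h))=\delta(e^{-h})\,\delta(h)+e^{-h}\delta^2(h)$, the second piece at once gives the summand $-\varphi_0\big(a\,e^{-h}G(\nabla)(\delta^2(h))\big)$ of the statement. For the first piece I would write $\delta(e^{-h})=e^{-h}\big(e^{h}\delta(e^{-h})\big)$ and invoke the flagged identity $e^{h}\delta(e^{-h})=\tfrac{e^{-\nabla}-1}{\nabla}(\delta(h))$; pulling $e^{-h}$ to the left and using that $\nabla$ acts on the product through the total weight converts this contribution into the symbol $G(s+t)\tfrac{e^{-s}-1}{s}$, the first summand of $P$. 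In parallel I would clear the $e^{-h}$ out of the two remaining terms: in $M_2(\nabla,\nabla)(e^{-h}\delta(h)\,\delta(h))$ the factor $e^{-h}$ already sits on the far left and simply factors out, yielding the symbol $M_2(s,t)$; in $M_1(\nabla,\nabla)(\delta(h)\,e^{-h}\delta(h))$, moving the interior $e^{-h}$ to the left across the first slot via $\delta(h)\,e^{-h}=e^{-h}e^{-\nabla}(\delta(h))$ decorates the first variable by $e^{-s}$ and yields the symbol $M_1(s,t)\,e^{-s}$. Summing the three symbols gives precisely $P(s,t)=G(s+t)\tfrac{e^{-s}-1}{s}+M_1(s,t)e^{-s}+M_2(s,t)$, and collecting the $e^{-h}$-prefactored terms completes the identity.

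I expect the main obstacle to be the bookkeeping in the last step: justifying that the interior factor $e^{-h}$ in $\delta(h)\,e^{-h}\delta(h)$ is equivalent, inside the two-variable calculus, to the weight shift $e^{-\nabla}$ on the \emph{first} slot, so that $M_1$ is decorated by $e^{-s}$ rather than $e^{-t}$, and keeping the assignment of the two slots to the two copies of $\delta(h)$ consistent across all three contributions. Once the relation $\Delta^{-1}=e^{-\nabla}$ and the commutation of $e^{-h}$ with the slotwise $\nabla$'s are used systematically, this reduces to a routine verification.
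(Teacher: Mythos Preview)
Your proposal is correct and follows essentially the same route as the paper: absorb $e^{-h}$ into $G(\nabla)$, integrate by parts using $\varphi_0\circ\delta=0$ and traciality, apply the preceding lemma with $x=e^{-h}\delta(h)$, expand $\delta(e^{-h}\delta(h))$, and then clear the interior $e^{-h}$ factors via $e^h\delta(e^{-h})=\frac{e^{-\nabla}-1}{\nabla}(\delta(h))$ and $\delta(h)\,e^{-h}=e^{-h}e^{-\nabla}(\delta(h))$ to read off the three summands of $P(s,t)$. Your more explicit bookkeeping of which slot receives the $e^{-s}$ decoration is a helpful elaboration but not a different argument.
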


\begin{lemma}
We have
\begin{eqnarray}
\varphi_0 \big ( e^{-h} G(\nabla)(\delta(a)) \delta(h) \big ) = 
- \varphi_0 \big ( a e^{-h} \bar G (\nabla)(\delta^2(h)) \big ) 
- \varphi_0 \big ( a e^{-h} Q(\nabla, \nabla) (\delta(h) \delta(h))  \big ), \nonumber
\end{eqnarray}
where 
\begin{eqnarray}
\bar G(s) &=& G(-s)e^{-s}, \nonumber \\ 
Q(s, t) &=& \bar G(s+t) \frac{e^{-s}-1}{s}+M_1(s, t)e^{-s}+M_2(s,t). \nonumber
\end{eqnarray}
\begin{proof}
It follows from the previous lemma after writing 
\begin{eqnarray}
\varphi_0 \big ( e^{-h} G(\nabla)(\delta(a)) \delta(h) \big ) &=& \varphi_0 \big ( e^{-h} G(-\nabla) e^{-\nabla}(\delta(h)) \delta(a) \big ).
 \nonumber 
\end{eqnarray}

\end{proof}

\end{lemma}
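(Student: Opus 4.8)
The plan is to derive this lemma not by a fresh computation but by reducing it to the previous lemma through a single rewriting that interchanges the two arguments $\delta(a)$ and $\delta(h)$ inside the weighted trace. Concretely, I would first establish the general identity
\[
\varphi_0\big(e^{-h}F(\nabla)(x)\,y\big)=\varphi_0\big(e^{-h}\bar F(\nabla)(y)\,x\big),\qquad \bar F(s)=F(-s)e^{-s},
\]
valid for any entire $F$ and any $x,y\in C^\infty(\mathbb{T}_\Theta^4)$. Once this is in place, applying it with $F=G$, $x=\delta(a)$, $y=\delta(h)$ produces exactly the rewriting recorded in the proof, namely $\varphi_0(e^{-h}G(\nabla)(\delta(a))\delta(h))=\varphi_0(e^{-h}\bar G(\nabla)(\delta(h))\delta(a))$, since $\bar G(\nabla)=G(-\nabla)e^{-\nabla}$ by the definition of $\bar G$.

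To prove the rewriting identity I would proceed in two steps. First I would integrate $\nabla$ by parts under the weighted trace: writing $\nabla(x)=[-h,x]=xh-hx$ and using that $e^{-h}$ commutes with $h$ together with the cyclicity of $\varphi_0$, a one-line computation gives $\varphi_0(e^{-h}\nabla(x)y)=-\varphi_0(e^{-h}x\nabla(y))$, and iterating yields $\varphi_0(e^{-h}F(\nabla)(x)y)=\varphi_0(e^{-h}x\,F(-\nabla)(y))$. Second I would swap the two factors by cyclicity, $\varphi_0(e^{-h}x\,w)=\varphi_0(w\,e^{-h}x)$ with $w=F(-\nabla)(y)$, and then restore the weight to the front using the modular relation $w\,e^{-h}=e^{-h}\,e^{-\nabla}(w)$, which follows from $e^{-\nabla}=\Delta^{-1}$ and $\Delta^{-1}(w)=e^{h}we^{-h}$. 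Combining the two steps gives $\varphi_0(e^{-h}F(\nabla)(x)y)=\varphi_0(e^{-h}\,e^{-\nabla}F(-\nabla)(y)\,x)$, which is the displayed identity with $\bar F(s)=e^{-s}F(-s)$.

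With the rewriting established, the lemma follows by feeding the right-hand side into the previous lemma with $G$ replaced throughout by $\bar G$: the first term becomes $-\varphi_0(ae^{-h}\bar G(\nabla)(\delta^2(h)))$, matching $\bar G(s)=G(-s)e^{-s}$, and the two-variable term becomes $-\varphi_0(ae^{-h}Q(\nabla,\nabla)(\delta(h)\delta(h)))$ with $Q$ the $\bar G$-analog of $P$, that is $Q(s,t)=\bar G(s+t)\frac{e^{-s}-1}{s}+M_1(s,t)e^{-s}+M_2(s,t)$, the functions $M_1,M_2$ here being those attached to $\bar G$. The point requiring care is the bookkeeping of the modular twist: it is the factor $e^{-\nabla}$ produced when restoring the weight $e^{-h}$ to the front that upgrades the naive $F(-s)$ coming from the integration by parts to the correct $\bar F(s)=F(-s)e^{-s}$, and it is exactly this twist that distinguishes $\bar G$ from $G(-\,\cdot\,)$ and hence $Q$ from $P$. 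No genuinely new calculation is needed beyond this exchange, so I expect the only obstacle to be checking that the two elementary manipulations compose to give precisely the claimed twist rather than its inverse.
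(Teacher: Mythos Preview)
Your proposal is correct and follows exactly the paper's approach: rewrite $\varphi_0(e^{-h}G(\nabla)(\delta(a))\delta(h))$ as $\varphi_0(e^{-h}\bar G(\nabla)(\delta(h))\delta(a))$ via the modular twist, then invoke the previous lemma with $\bar G$ in place of $G$. Your two-step derivation of the rewriting identity (integration of $\nabla$ by parts under the weighted trace, followed by cyclicity and the relation $w\,e^{-h}=e^{-h}e^{-\nabla}(w)$) is precisely what justifies the single line the paper records, and your observation that the $M_1,M_2$ appearing in $Q$ must be the finite differences built from $\bar G$ rather than from $G$ is the correct reading of what ``applying the previous lemma with $\bar G$'' actually yields.
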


The Taylor series at $s=0$ of the one variable function $\omega_1$ appearing in the formula for the 
gradient  of $\Omega$ is given by  
\[ \omega_1(s)=-\frac{1}{2}+\frac{s}{4}-\frac{s^2}{6}+\frac{s^3}{16}-\frac{
   s^4}{45}+\frac{s^5}{160}+O\left(s^6\right), 
\]
and the following is its graph.  
\vskip 0.5 cm
\begin{center}
\includegraphics[scale=0.5]{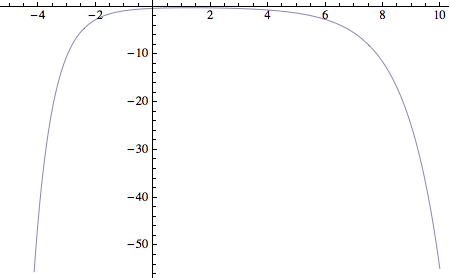}\\{Graph of the function $\omega_1$.}
\label{GofK}
\end{center}
\vskip 0.5 cm

The function of two variables $\omega_2$ appearing in the formula for the 
above gradient has the following Taylor expansion at the origin 

\begin{eqnarray} 
\omega_2(s, t) &=&
\left(\frac{1}{4}+\frac{t}{24}+\frac{13 t^2}{240}-\frac{7
   t^3}{360}+O\left(t^4\right)\right) +s
   \left(-\frac{3}{8}+\frac{5 t}{48}-\frac{17
   t^2}{120}+\frac{t^3}{14}+O\left(t^4\right)\right) \nonumber \\
   &&+s^2
   \left(\frac{47}{240}-\frac{t}{6}+\frac{77
   t^2}{480}-\frac{1159
   t^3}{13440}+O\left(t^4\right)\right)\nonumber \\ 
   &&+s^3
   \left(-\frac{83}{720}+\frac{169 t}{1260}-\frac{697
   t^2}{5760}+\frac{151
   t^3}{2304}+O\left(t^4\right)\right)+O\left(s^4\right),  \nonumber 
\end{eqnarray}   
 and here is its graph:

\vskip 0.5 cm
\begin{center}
\includegraphics[scale=0.5]{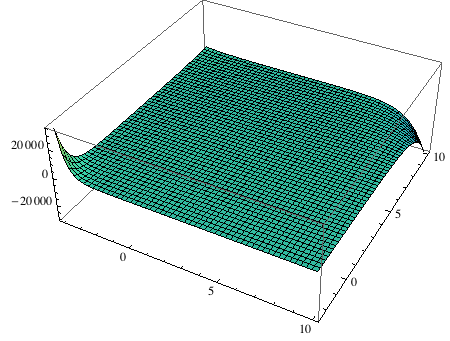}\\{Graph of the function $\omega_2$.}
\label{GofH}
\end{center}
\vskip 0.5 cm

We now look at the behavior of the function $\omega_2$ on the diagonals. We have 
\[
\omega_2(s, s) =
\]
{\small \[
-\frac{e^{-3 s/2} \sinh \left(\frac{s}{2}\right) (8 s+(8 s-5)
   \sinh (s)-3 \sinh (2 s)+\sinh (3 s)-8 \cosh (s)-3 \cosh (2
   s)+11)}{4 s^3}, 
\]
}
with the Taylor expansion 
\[
\omega_2(s, s) =
\frac{1}{4}-\frac{s}{3}+\frac{17 s^2}{48}-\frac{319
   s^3}{720}+\frac{623 s^4}{1440}-\frac{155
   s^5}{448}+O\left(s^6\right), 
\]
and the following graph. 
\vskip 0.5 cm
\begin{center}
\includegraphics[scale=0.5]{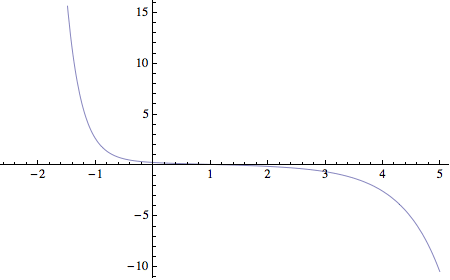}\\{Graph of the function $s \mapsto \omega_2(s, s)$.}
\label{GofH}
\end{center}
\vskip 0.5 cm
On the other diagonal we have 
\[
\omega_2(s, -s)=\frac{4 s+e^{-2 s}-2 e^s+1}{4 s^2}=\frac{1}{4}-\frac{5 s}{12}+\frac{7 s^2}{48}-\frac{17
   s^3}{240}+\frac{31 s^4}{1440}-\frac{13
   s^5}{2016}+O\left(s^6\right), 
\]
whose graph is the following. 
\vskip 0.5 cm
\begin{center}
\includegraphics[scale=0.5]{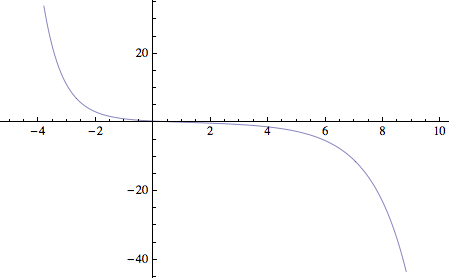}\\{Graph of the function $s \mapsto \omega_2(s, -s)$.}
\label{GofH}
\end{center}
\vskip 0.5 cm

\section{Discussion}

Considering the variety of geometric spaces that fit into the paradigm of noncommutative 
geometry \cite{con1.5, con3}, it is of great importance to 
develop different methods for computing their local geometric invariants such as 
scalar curvature. Different methods of computation can also help to achieve conceptual understandings  
of such invariants for specific examples.

The appearance of functions of a modular automorphism in the final formulas 
for the scalar curvature of noncommutative tori  is a purely 
noncommutative feature \cite{conmos2, fatkha4, fatkha2, dabsit2}, which accompanies the 
following striking facts. First, the final one and two 
variable functions in the curvature formulas are significantly simple, which indicates an enormous amount 
of cancellations in the algebraically lengthy formulas that involve hundreds of terms with numerous 
functions from the rearrangement lemma  involved \cite{contre, conmos2, bhumar, fatkha4}. Second, 
the function of two variables for the curvature of $\mathbb{T}_\theta^2$ can be recovered from the one 
variable function \cite{conmos2} by finite differences.

Using the noncommutative residue that involves integration on the 3-sphere \cite{fatwon, fatkha2}, 
we computed the scalar curvature of $\mathbb{T}_\Theta^4$ in this paper without using the rearrangement 
lemma. This method avoids the complexity stemming from the functions coming from this lemma and also 
explains the simplicity of the final functions in the curvature formula. It  should be emphasized that there is 
another rather technical simplifying factor in this method, compared to the use of parametric pseudodifferential calculus, 
which is due to the fact that the first term $b_0$ of the parametrix of the Laplacian reduces on the sphere to a power of the Weyl factor. 
It can be seen in the derivation of \eqref{SC4formula1} that this softens out some further complexities and leads to 
the final expression, whose summand consists of only a few terms. By working out a simple relation between 
the functions that relate the derivatives of the Weyl factor to the derivatives of the dilaton, we have then written 
the two variable function in the formula for the curvature of $\mathbb{T}_\Theta^4$ as the sum of a finite difference and a finite product of the one variable function.

Similar to the concrete illustration in \cite{conmos2} of the scalar curvature of $\mathbb{T}_\theta^2$ for dilatons associated 
with projections, which exist in abundance for noncommutative tori \cite{rie1}, we have worked out a concrete 
formula in the case of $\mathbb{T}_\Theta^4$. For a dilaton of the form $h = s p$, where $s \in \mathbb{R}$ and 
$p\in C^\infty(\mathbb{T}_\Theta^4)$ is an arbitrary projection, the final concrete formula for the curvature involves 
unbounded functions of the parameter $s$, which is in contrast to the striking fact about the boundedness of the 
functions obtained in the two dimensional case \cite{conmos2}. The question that arises is whether there is a conceptual 
meaning behind this contrast. Also, the explicit computation of the gradient of the analog of the Einstein-Hilbert action 
for $\mathbb{T}_\Theta^4$ prepares the ground for further studies of the natural associated geometric flows in this context, 
cf. \cite{bhumar, conmos2}.

\section*{Acknowledgments}

The author thanks the Max-Planck Institute for Mathematics, the Stefan Banach International Mathematical Center at 
the Institute for Mathematics Polish Academy of Sciences, and the Hausdorff Research Institute for Mathematics 
for their support and hospitality.

\end{document}